\documentclass{amsart}
\usepackage{amssymb}
\usepackage{enumerate}

%
%

\usepackage{color,soul,xspace}
\definecolor{paleblue}{rgb}{0.7804,0.8353,0.9725}
\sethlcolor{paleblue}

%
%

\theoremstyle{plain}
\newtheorem{thm}{Theorem}[section]

\theoremstyle{definition}

\theoremstyle{remark}

\newtheorem{claim}[thm]{Claim}

\numberwithin{equation}{section}

%
%
\newcommand{\zj}{\emptyset'}
\renewcommand{\epsilon}{\varepsilon}
\renewcommand{\phi}{\varphi}
\renewcommand{\succeq}{\succcurlyeq}
\renewcommand{\preceq}{\preccurlyeq}

\newcommand{\seq}[1]{\langle{#1}\rangle}

\DeclareMathOperator{\uh}{\upharpoonright}

%
%

\newcommand{\PP}{\mathbb{P}}

%
%

\begin{document}

\title{Density, forcing, and the covering problem}

\renewcommand{\datename}{Last compilation:}
\date{\today.\\\indent Last time the following date was changed: April 8, 2013}

\author{Adam R.~Day}
\address{Department of Mathematics\\
University of California, Berkeley\\
Berkeley, CA 94720-3840, USA}
\email{adam.day@math.berkeley.edu}

\author{Joseph S.~Miller}
\address{Department of Mathematics\\
University of Wisconsin\\
Madison, WI 53706-1388, USA}
\email{jmiller@math.wisc.edu}

\thanks{The first author was supported by a Miller Research Fellowship in the Department of Mathematics at the University of California, Berkeley. The second author was supported by the National Science Foundation under grant DMS-1001847.}

\makeatletter
\@namedef{subjclassname@2010}{\textup{2010} Mathematics Subject Classification}
\makeatother
\subjclass[2010]{Primary 03D32; Secondary 68Q30, 03D30}


\maketitle

\begin{abstract}
We present a notion of forcing that can be used, in conjunction with other results, to show that there is a Martin-L\"of random set $X$ such that $X \not \geq_T \zj$ and $X$ computes every $K$-trivial set.
\end{abstract}

\section{Introduction}

Hirschfeldt, Nies and Stephan \cite{HiNiSt:07} proved that if $A\in 2^\omega$ is c.e.\ and there is a Martin-L\"of random $X\geq_T A$ that does not computes $\emptyset'$, then $A$ is $K$-trivial. Stephan asked if this gives a characterization of the c.e.\ $K$-trivial sets. Each $K$-trivial is computable from a c.e.\ $K$-trivial, so this amounts to asking:
\begin{quote}
If $A$ is $K$-trivial, is there a Martin-L\"of random $X\ngeq_T \emptyset'$ that computes $A$?	
\end{quote}
The history and significance of this question, known as the \emph{covering problem}, is presented in a summary paper by the authors of this paper and Bienvenu, Greenberg, Ku\v{c}era, Nies and Turetsky~\cite{everyone}. The present paper, combined with theorems of Bienvenu, Greenberg, Ku\v{c}era, Nies and Turetsky \cite{BiGrKuNiTu:}, and Bienvenu, H\"olzl, Miller and Nies \cite{BiHoMiNi:12}, gives a strong affirmative answer to the covering problem:
\begin{itemize}
	\item[(a)] There is a Martin-L\"of random $X\ngeq_T \emptyset'$ that computes every $K$-trivial.
\end{itemize}
Furthermore, we get two interesting refinements:
\begin{itemize}
	\item[(b)] There is a Martin-L\"of random $X <_T \emptyset'$ that computes every $K$-trivial.
	\item[(c)] If $\seq{A_n\colon n\in\omega}$ is a countable sequence of non-$K$-trivial sets, then there is a Martin-L\"of random $X\ngeq_T \emptyset'$ that computes every $K$-trivial but no $A_n$.
\end{itemize}
By (c), for example, there is an incomplete Martin-L\"of random set $X$ such that the $\Delta^0_2$ sets computed by $X$ are precisely the $K$-trivial sets. This $X$ and Chaitin's $\Omega$ are Martin-L\"of random sets that form an exact pair for the ideal of $K$-trivial sets (i.e., $A \leq_T X, \Omega$ if and only if $A$ is a $K$-trivial set). 

Our contribution to the solution of the covering problem comes out of a careful analysis of Lebesgue density for $\Pi^0_1$ classes. Let $\mu$ be the uniform measure on Cantor space. If $\tau \in 2^{<\omega}$ and $P$ is a measurable set in Cantor space, then we define \[\mu_{\tau}(P) = \frac{\mu(P \cap [\tau])}{ \mu([\tau])}.\]
Given any measurable set $P$ and $X \in 2^\omega$, we define $\rho(P \mid X) = \liminf_{i} \mu_{X\uh i}(P)$. We call $X\in 2^{\omega}$ a \emph{density-one point} if for every $\Pi^0_1$ class $P$ it is the case that
\[
X \in P \implies  \rho(P \mid X) = 1.
\]
If for every $\Pi^0_1$ class $P$ we have $X \in P \implies  \rho(P \mid X) > 1$, then $X$ is called a \emph{positive density point}. In Section~\ref{sect:forcing},  we present a notion of forcing that separates density-one from positive density on the Martin-L\"of random sets. In other words, if $X$ is a sufficiently generic set for this notion of forcing then:
\begin{enumerate}
\item $X$ is Martin-L\"of random, \label{random}
\item $X$ is not a density-one point, \label{not density}
\item $X$ is a positive density point. \label{incomplete}
\end{enumerate}
Properties \eqref{random}, \eqref{not density} and \eqref{incomplete} of generic sets will be established by Claims~\ref{cl:random}, \ref{cl:not density} and \ref{cl:incomplete}, respectively. This forcing notion, in conjunction with the following two theorems, provides a solution to the covering problem.

\begin{thm}[Bienvenu, H\"olzl, Miller and Nies \cite{BiHoMiNi:,BiHoMiNi:12}]
\label{thm:incomplete}
If $X \geq_T \emptyset'$ and Martin-L\"of random, then there exists a $\Pi^0_1$  class $P$ such that $X \in P$ and $\rho(P \mid X) =0$. 
\end{thm}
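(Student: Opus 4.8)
The plan is to realize $P$ as the complement of a carefully enumerated effectively open set $U$, where membership in $U$ records a \emph{refuted} computation of the modulus of $\emptyset'$. Since $X\geq_T\emptyset'$ and the modulus $m$ of a fixed enumeration $\seq{\emptyset'_s}$ satisfies $m\equiv_T\emptyset'$, first I would fix a Turing functional $\Phi$ with $\Phi^X=m$, where $m(n)$ is the least stage after which $\emptyset'\uh n$ never changes. For a string $\sigma\in2^{<\omega}$ with $\Phi^\sigma(n)\da=t$ (use $\le|\sigma|$), the value $t$ \emph{asserts} that $\emptyset'\uh n$ has stabilized by stage $t$. I enumerate $[\sigma]$ into $U$ exactly when this assertion is refuted, i.e.\ when some number below $n$ later enters $\emptyset'$ after stage $t$; this is a c.e.\ event, so $U$ is c.e.\ open and $P=2^\omega\setminus U$ is a $\Pi^0_1$ class. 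Because $\Phi^X=m$ computes the true modulus, none of $X$'s converging computations is ever refuted, so $X\notin U$, giving $X\in P$.

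The substance is the density estimate $\rho(P\mid X)=\liminf_i\mu_{X\uh i}(P)=0$. The idea is that the modulus $m$ is not dominated by any computable function, so along $X$ the true values $m(n)$ are enormous and essentially unguessable. Fix a level $i$ and look at extensions $\tau\supseteq X\uh i$: as $|\tau|$ grows, $\Phi^\tau$ converges on the next input $n$ and commits $\tau$ to some asserted stabilization stage. Only those $\tau$ asserting the correct huge value $m(n)$ can remain in $P$; all the rest are refuted once a number enters $\emptyset'$ past their (too-small) claimed stage, and hence pass into $U$. Thus each new element entering $\emptyset'$ below the currently decided length removes from the conditional cylinder $[X\uh i]$ all the extensions that committed early to the soon-false value, while sparing $X$ itself, whose correct value $m(n)$ forces it to read a longer prefix and wait long enough. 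Iterating across the infinitely many such entries, the conditional measure $\mu_{X\uh i}(P)$ is driven down multiplicatively, so that for a suitable increasing sequence of levels $i$ the value $\mu_{X\uh i}(P)$ tends to $0$, yielding $\liminf_i\mu_{X\uh i}(P)=0$ as required.

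The hard part will be making the measure bookkeeping quantitative: I must show that the refuted extensions really do account for (nearly) all of the conditional measure of $[X\uh i]$ for infinitely many $i$, rather than a vanishing fraction, and do so \emph{while $X$ survives}. The global intuition is sound---Sacks' theorem gives $\mu\{Y:Y\geq_T\emptyset'\}=0$, so the set of perfect modulus-computers is null---but this does not by itself control the local densities $\mu_{X\uh i}(P)$ along a single $X$. The crux is therefore a counting argument showing that, at appropriately chosen levels, the fraction of extensions guessing the next enormous modulus value correctly is bounded away from $1$ (indeed tends to $0$ along the chosen subsequence), together with an appeal to the Martin-L\"of randomness of $X$ to prevent $X$ from lying in the exceptional, measure-zero configuration where its computations would always converge with unusually short use and thereby keep the conditional measure of $P$ bounded below. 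Balancing ``kill almost all of the cylinder'' against ``never kill $X$'' is exactly where randomness must be spent, and I expect this to be the main obstacle.
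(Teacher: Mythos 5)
First, a point of order: the paper does not prove Theorem~\ref{thm:incomplete} at all; it imports it from \cite{BiHoMiNi:12}. So your attempt must be judged against the argument in that paper, and on its own merits. The preliminary part of your construction is fine: $U$ is c.e.\ open, $P=2^\omega\setminus U$ is $\Pi^0_1$, and $X\in P$ because the true modulus is never refuted. But the density estimate---which is the entire content of the theorem---has a structural gap, not a bookkeeping one. Your $U$ can only ever contain reals that make a \emph{convergent} guess that is \emph{too small}. An extension $\tau$ of $X\uh i$ on which $\Phi$ diverges, or which outputs any value at least the true settling time, is never enumerated into $U$ and remains in $P$ forever. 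In particular your sentence ``only those $\tau$ asserting the correct huge value $m(n)$ can remain in $P$'' is false: any overestimate survives, and so does total silence, and silence is the easiest way to survive. The functional $\Phi$ is only hypothesized to behave correctly on the single oracle $X$; nothing in the hypotheses constrains its domain off $X$, so there is no lower bound whatsoever on the local measure of under-guessers inside $[X\uh i]$, and $\mu_{X\uh i}(P)$ can perfectly well stay near $1$ at every level. Your proposed repairs do not touch this: Sacks's theorem makes the class of reals computing $\emptyset'$ null, but the survivors of your test form a vastly larger class (all divergers and over-estimators), and the Martin-L\"of randomness of $X$ says nothing about the measure of \emph{other} reals' divergence near $X$.

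The missing idea---the heart of the proof in \cite{BiHoMiNi:12}---is that the refutations must be \emph{chosen by the construction} rather than passively awaited from the fixed enumeration of $\emptyset'$. There one fixes $\Gamma$ with $\Gamma^X=\emptyset'$ and, by the recursion theorem, controls the value of $\emptyset'$ at designated positions. The construction watches the c.e.\ open set of oracles that have committed, via $\Gamma$, to the \emph{current} value of a controlled position, and enumerates that position into $\emptyset'$ only at a moment when the committed oracles occupy a large fraction of some cylinder. This single move does two things at once: every committed oracle becomes safe to dump into $U$ (it now computes $\emptyset'$ incorrectly, so it cannot be $X$), and $X$'s own computation is forced to the new value, so $X$ is spared automatically. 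That synchronization between where the measure sits and what gets refuted is exactly what a passive scheme cannot achieve: in your construction, the given $\Phi$ and the fixed enumeration of $\emptyset'$ decide which cylinders die and when, and there is no reason this should ever concentrate near $X$. Without a mechanism that also kills (or forces commitment from) the non-committing neighbors, no choice of levels and no counting argument will drive $\mu_{X\uh i}(P)$ to $0$.
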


\noindent We should note that Bienvenu, et al.\ prove Theorem~\ref{thm:incomplete} for density on the unit interval. However, the Cantor space version follows immediately from the proof given in \cite[Theorem 20]{BiHoMiNi:12}.

\begin{thm}[Bienvenu, Greenberg, Ku\v{c}era, Nies and Turetsky~\cite{BiGrKuNiTu:}]
\label{thm:them}
If $X\in 2^\omega$ is Martin-L\"of random and not a density-one point, then $X$ computes every $K$-trivial set.
\end{thm}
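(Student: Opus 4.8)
The plan is to prove the stated implication directly: given an ML-random $X$ and a $\Pi^0_1$ class $P$ with $X \in P$ and $\rho(P \mid X) = r < 1$, I would build, for an arbitrary $K$-trivial set $A$, a Turing functional $\Delta$ with $\Delta^X = A$. First I would reduce to the case that $A$ is computably enumerable, since every $K$-trivial is computable from a c.e.\ $K$-trivial and Turing reductions compose. So fix a c.e.\ set $A$ with a computable enumeration $\seq{A_s}$ and invoke the cost-function characterization of $K$-triviality: $A$ obeys a cost function $c$, meaning that the total cost $\sum_s c(x_s,s)$ of the changes in the approximation (with $x_s$ least such that $A_s(x_s)\neq A_{s-1}(x_s)$) is finite. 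By rescaling I would make this total cost as small as I like.

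Second, I would extract the computational resource hidden in the density deficiency. Since $\liminf_i \mu_{X \uh i}(P) = r < 1$, fix $\delta$ with $r < 1-\delta$; then $\mu_{X \uh i}(P) \le 1-\delta$ for infinitely many $i$, so the effectively open complement $U = 2^\omega \setminus P$ satisfies $\mu_{X \uh i}(U) \ge \delta$ at infinitely many scales. Because $\mu_{X\uh i}(U)$ is lower-semicomputable, I can, with $X$ as oracle, wait until the enumerated part of $U$ inside $[X \uh i]$ exceeds $\delta$ and thereby locate such ``windows'' on demand. This is the key point: a random $X$ that is not a density-one point furnishes, below arbitrarily long initial segments, a uniformly detectable reservoir of $\Sigma^0_1$-measure that $X$ itself avoids.

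Third comes the construction of $\Delta$. I would maintain for each $x$ a use $X \uh \ell_x$ with $\Delta^{X \uh \ell_x}(x) = A_s(x)$, and handle a change of $A$ at $x$ (of cost $c$) by \emph{absorbing} the outdated computation: I enumerate into an auxiliary Solovay test, built alongside $U$, a region of measure proportional to $c$ placed so as to invalidate the old use, and then redefine the use at a longer, fresh initial segment of $X$ found inside one of the density windows. Since $\sum_s c(x_s,s)$ is finite, the total measure ever enumerated is bounded, so the auxiliary object is a genuine bounded test; as $X$ is ML-random it passes the test, so only finitely many absorptions actually trap $X$, each use eventually stabilizes, and $\Delta^X = A$.

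The main obstacle is the bookkeeping at the heart of the third step, which is exactly the decanter/golden-run combinatorics of the $K$-trivial theory, here reorganized around density. The timing of the enumeration of $A$ is not under our control, and neither is the sequence of scales at which $X$'s density drops; the difficulty is to synchronize the two, so that whenever $A$ forces a use-change there is a density window available to house the new use, and so that the measure charged for that change is simultaneously large enough to cover the cost-function value yet small enough that the grand total stays finite. A further delicate point, and the reason the result is sharper than one might expect, is that only $\rho(P \mid X) < 1$ is assumed rather than $\rho(P \mid X)=0$: the cost-to-measure conversion must be calibrated against $\delta$ and iterated carefully, rather than relying on a vanishing density. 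Verifying that this calibration closes — that finite total cost really does buy a correct reduction — is where essentially all the work lies.
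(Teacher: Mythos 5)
The paper you were asked about does not actually prove this theorem: it imports it as a black box from Bienvenu, Greenberg, Ku\v{c}era, Nies and Turetsky, remarking that the known proofs are multi-step and rely on substantial $K$-triviality machinery. Measured against that external proof, your proposal has a genuine gap, and it is concentrated exactly where you defer the work. The resource you extract in your second step cannot support the test accounting your third step needs. Having infinitely many scales $i$ with $\mu_{X\uh i}(U)\geq\delta$, where $U=2^\omega\smallsetminus P$, is not a small event: every element of the open set $U$ has this property trivially (once $[Y\uh i]\subseteq U$ the conditional measure is $1$), so the set of oracles admitting $k$ nested $\delta$-windows need not shrink as $k$ grows; and even after intersecting with $P$, the natural bound is stuck at $1-\delta$ for every $k$, because the windows live mostly inside $U$ and, conditioned on being in a window, the next window is nearly free. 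Consequently the auxiliary Solovay test you propose to charge $A$-changes against need not have finite, or even shrinking, measure, and the ML-randomness of $X$ then gives you nothing. This is not bookkeeping to be calibrated later: with a fixed $\delta$ and the standard cost function, the naive window iteration provably cannot produce decay. Indeed, if nested $\delta$-windows did yield a shrinking test of the difference-test type, then every ML-random non-density-one point would fail it and hence compute $\emptyset'$ by the Franklin and Ng characterization of difference randomness, contradicting the main result of this very paper (which constructs ML-random non-density-one points of positive density, hence incomplete).

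A second, related problem is your ``absorb and redefine the use'' mechanism. A Turing functional is a c.e.\ set of axioms; once an axiom $\Delta^\sigma(x)=v$ with $\sigma\prec X$ is issued, that computation is permanent for the oracle $X$ and cannot be invalidated. The only available move is to arrange that all oracles holding outdated axioms are captured by a test of bounded total measure, conclude that the random $X$ holds only finitely many wrong axioms, and correct finitely many values non-uniformly. For that accounting to close, axioms on argument $x$ must be issued only to oracles already observed inside a set $M_x$ whose measure is tied, in advance, to the cost of future changes of $A(x)$. This is precisely what the actual proof achieves: it first converts the density deficiency into a left-c.e.\ bounded (``Oberwolfach'') test $(\beta,\seq{M_n})$ with $\mu(M_n)\leq\beta-\beta_n$ that $X$ fails --- exploiting that $\mu(U\cap[\sigma])$ is left-c.e., not merely that windows exist --- and then uses the theorem that every $K$-trivial obeys the additive cost function $c(x,s)=\beta_s-\beta_x$ attached to that particular test, so that the measure of trapped oracles telescopes against the total cost. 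Your sketch contains neither the construction of such a test nor the matching of the cost function to it; those are the two essential steps of the theorem, not calibration details of an otherwise complete argument.
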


\noindent The original proof of Theorem~\ref{thm:them}, given in \cite{BiGrKuNiTu:}, involves several steps. A direct proof, though one relying on more of the theory of $K$-triviality, is given by Bienvenu, H\"olzl, Miller and Nies \cite{BiHoMiNi:}.

By Theorem~\ref{thm:incomplete}, properties \eqref{random} and \eqref{incomplete} imply that $X$ does not compute $\zj$. By Theorem~\ref{thm:them}, properties \eqref{random} and \eqref{not density} imply that $X$ computes all $K$-trivial sets. This shows (a). In Claim~\ref{cl:avoidance}, we show that if $A$ is not $K$-trivial and $X$ is sufficiently generic for our notion of forcing, then $X\ngeq_T A$. This gives us (c); in a sense, our forcing notion is perfectly tuned to constructing incomplete Martin-L\"{o}f random sets that compute all $K$-trivial sets. To show (b), we effectivize the forcing notion in Section~\ref{sec:delta2} to show that there is a $\Delta^0_2$ set $X$ with properties \eqref{random}, \eqref{not density} and \eqref{incomplete}.

\section{The forcing notion}
\label{sect:forcing}

Fix a nonempty $\Pi^0_1$ class $P\subseteq 2^\omega$ that contains only Martin-L\"of random sets. Our forcing partial order $\PP$ consists of conditions of the form $\seq{\sigma,Q}$, where
\begin{itemize}
	\item $\sigma\in 2^{<\omega}$,
	\item $Q\subseteq P$ is a $\Pi^0_1$ class,
	\item $[\sigma]\cap Q\neq\emptyset$,
	\item There is a $\delta<1/2$ such that $(\forall \rho\succeq\sigma)\; [\rho]\cap Q\neq\emptyset\implies \mu_{\rho}(Q)+\delta\geq \mu_{\rho}(P)$.
\end{itemize}
We say that $\seq{\tau,R}$ extends $\seq{\sigma,Q}$ if $\tau\succeq\sigma$ and $R\subseteq Q$. Let $\lambda$ be the empty string. Note that $\seq{\lambda,P}\in\PP$, with $\delta=0$, so $\PP$ is nonempty.

If $G\subseteq\PP$ is a filter, let $X_G = \bigcup_{\seq{\sigma,Q}\in G} \sigma$. In general, $X_G\in 2^{\leq\omega}$. The following claim is trivial to verify and it establishes that if $G$ is sufficiently generic, then $X_G$ is infinite and, in fact, a Martin-L\"of random set.

\begin{claim}\label{cl:random}\ 
\begin{enumerate}
	\item If $\seq{\sigma,Q}\in\PP$ and $\tau\succeq\sigma$ is such that $[\tau]\cap Q\neq\emptyset$, then $\seq{\tau,Q}\in\PP$.
	\item If $G\subseteq\PP$ is sufficiently generic, then $X_G\in P$ (hence it is a Martin-L\"of random set).
\end{enumerate}
\end{claim}


\begin{claim}
\label{cl:not density}
If $G\subseteq\PP$ is sufficiently generic, then $\rho(P\mid X_G)\leq 1/2$, so $X_G$ is not a density-one point.
\end{claim}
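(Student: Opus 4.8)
The plan is to show that for any condition $\seq{\sigma, Q} \in \PP$, one can find an extension $\seq{\tau, R}$ forcing the density at $X_G$ (measured against $P$) to dip below $1/2 + \varepsilon$ at some level, and then argue that the collection of conditions achieving this for smaller and smaller $\varepsilon$ is dense. Genericity then pins $\rho(P \mid X_G) \leq 1/2$.

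\begin{proof}[Proof sketch]
The key observation is that the defining inequality of a condition, $\mu_\rho(Q) + \delta \geq \mu_\rho(P)$ with $\delta < 1/2$, actually forces density \emph{down}, not up: it guarantees that wherever $Q$ has a representative, the local measure of $Q$ cannot lag too far behind that of $P$, but it says nothing preventing $\mu_\rho(P)$ itself from being close to $\delta$. I would exploit exactly this slack. The heart of the matter is to show that the following set is dense in $\PP$ for each rational $\varepsilon > 0$:
\[
D_\varepsilon = \{\seq{\tau, R} \in \PP : \mu_\tau(P) \leq \tfrac{1}{2} + \varepsilon\}.
\]
If I can establish this, then for a sufficiently generic $G$ meeting every $D_\varepsilon$, there are arbitrarily long initial segments $\tau \prec X_G$ with $\mu_\tau(P) \leq 1/2 + \varepsilon$ for every $\varepsilon$, whence $\liminf_i \mu_{X_G \uh i}(P) = \rho(P \mid X_G) \leq 1/2$.

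To prove density of $D_\varepsilon$, fix an arbitrary condition $\seq{\sigma, Q}$ with witness $\delta < 1/2$. Since $[\sigma] \cap Q \neq \emptyset$ and $Q \subseteq P \subseteq 2^\omega$ consists of random sets, $Q$ is nonempty and nowhere dense in a strong measure-theoretic sense; I expect that the local measure $\mu_\rho(P)$ must drop below $\delta + \varepsilon < 1/2 + \varepsilon$ at some string $\rho \succeq \sigma$ with $[\rho] \cap Q \neq \emptyset$. The reason is that if $\mu_\rho(P)$ stayed bounded away from, say, $\delta + \varepsilon$ along every branch of $Q$ above $\sigma$, then combined with $\mu_\rho(Q) + \delta \geq \mu_\rho(P)$ one would be forced to conclude $\mu_\rho(Q)$ is bounded below by a positive constant along all of $Q$; but a $\Pi^0_1$ class of positive-measure-density everywhere contains an interval up to measure zero, contradicting that $Q$ contains only random reals (which form a measure-one but topologically meager, in fact nowhere-everywhere-dense, set). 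Having located such a $\rho$, I take $\tau = \rho$ and $R = Q$; by Claim~\ref{cl:random}(1) this is again a condition, it extends $\seq{\sigma, Q}$, and it lies in $D_\varepsilon$.

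The main obstacle is making the density-drop argument of the previous paragraph rigorous: I need to rule out the scenario in which $\mu_\rho(P)$ remains large along every extension $\rho \succeq \sigma$ meeting $Q$. The cleanest route is probably a Lebesgue density argument applied to $P$ itself. Since $P$ has measure strictly less than one (a nonempty $\Pi^0_1$ class of randoms cannot be conull, as its complement is a nonempty open set), almost every point \emph{outside} $P$ is a density-one point of the complement, i.e.\ has $\mu_\rho(P) \to 0$. The difficulty is that I need such a point to be approachable \emph{through strings meeting $Q$}, not merely through strings meeting $P$. This is where the structure of $Q$ as a subclass of $P$ and the $\delta$-bound must be leveraged: I would argue that the $\delta$-condition propagates the smallness of $\mu_\rho(P)$ and $\mu_\rho(Q)$ together, so that I can follow a branch of $Q$ into a region where $P$ thins out, using König's lemma on the (infinite, finitely-branching) tree of strings that both extend $\sigma$ and meet $Q$ to extract an actual branch of $Q$ witnessing $\rho(P \mid \cdot) \leq 1/2$ in the limit. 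Verifying that this branch-following stays inside $Q$ while driving $\mu_\tau(P)$ below $1/2 + \varepsilon$ is the delicate step.
\end{proof}
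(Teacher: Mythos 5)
Your overall strategy---drive the relative measure of $Q$ down at some string that still meets $Q$, then use the condition's inequality $\mu_\tau(P)\leq\mu_\tau(Q)+\delta$ to pull $\mu_\tau(P)$ below $1/2$---is exactly the paper's strategy. But the step you yourself flag as ``the delicate step'' is a genuine gap, and the lemma you lean on to bridge it is false. You claim that a $\Pi^0_1$ class whose relative measure is bounded below by a positive constant at every string meeting it must contain a cylinder up to measure zero. Counterexample: fix a computable sequence $a_0<a_1<\cdots$ with $\sum_n 2^{-(a_{n+1}-a_n)}<1/2$, and let $Q$ be the class of all $X$ such that for every $n$ the block $X(a_n)\cdots X(a_{n+1}-1)$ is not identically zero. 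This is a $\Pi^0_1$ class with $\mu_\rho(Q)\geq 1/4$ for every $\rho$ meeting $Q$, yet no cylinder is contained in $Q$ even mod null, since every $[\rho]$ retains positive measure outside $Q$. Consequently no purely measure-theoretic or topological argument of the kind you sketch (Lebesgue density applied to $P$, K\"onig's lemma branch-following) can possibly succeed: your argument never actually uses the hypothesis that the members of $Q$ are Martin-L\"of random, and the example shows that hypothesis is indispensable. (Note also that following a ``typical'' branch of $Q$ is hopeless: by the Lebesgue density theorem almost every point of $Q$ has $Q$-density, hence $P$-density, equal to $1$; the witnessing branch must be exceptional.)

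The missing idea is how randomness enters, and the paper's mechanism is the leftmost path. Let $Z$ be the leftmost element of $[\sigma]\cap Q$. Since $Z\in P$, it is Martin-L\"of random, hence contains runs of $1$'s of every length. Fix $m$ with $2^{-m}<1/2-\delta$ and let $\tau\prec Z$ be a prefix (of any desired length) ending just before a run of $m$ ones, so $\tau 1^m\prec Z$. By leftmost-ness, $Q\cap[\tau]\subseteq[\tau 1^m]$, so $\mu_\tau(Q)\leq 2^{-m}$ and therefore $\mu_\tau(P)\leq\mu_\tau(Q)+\delta<2^{-m}+\delta<1/2$; and $\seq{\tau,Q}$ is a condition extending $\seq{\sigma,Q}$ by Claim~\ref{cl:random}(1). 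A second, smaller gap: your dense sets $D_\varepsilon$ carry no length requirement, so a generic filter could meet every one of them via a single short condition (any $\tau$ with $\mu_\tau(P)\leq 1/2$ lies in all of them simultaneously), which says nothing about the liminf along $X_G$. You need sets of the form $D_{n}=\{\seq{\tau,R}\colon |\tau|\geq n \wedge \mu_\tau(P)<1/2\}$, one for each $n$, which is exactly how the paper sets it up; the leftmost-path argument delivers these as well, since runs of $1$'s occur arbitrarily late along $Z$.
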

\begin{proof}
Fix $n$. We will show that the conditions forcing
\begin{equation}\label{eq:force}
(\exists l\geq n)\;\mu_{X_{\dot{G}} \uh l}(P) < 1/2
\end{equation}
are dense in $\PP$.
Let $\seq{\sigma, Q}$ be any condition and let $\delta$ witness that $\seq{\sigma, Q} \in \PP$. Take $m$ such that $2^{-m} < 1/2-\delta$.
Let $Z$ be the left-most path of $[\sigma] \cap Q$. The set $Z$ is Martin-L\"of random and consequently contains arbitrarily long intervals of $1$'s. Take $\tau \succeq \sigma$ such that $\tau 1^m \prec Z$ and $|\tau| \geq n$.  Because $Z$ is the left-most path in $Q$ it follows that $\mu_{\tau}(Q) \leq 2^{-m}$ and so 
\[
\mu_{\tau}(P) \leq \mu_{\tau}(Q) +\delta < 2^{-m} + \delta < 1/2.
\]
Hence the condition $\seq{\tau, Q}$ extends $\seq{\sigma, Q}$ and forces \eqref{eq:force}.
\end{proof}

\begin{claim}
\label{cl:incomplete}
Let $S\subseteq 2^\omega$ be a $\Pi^0_1$ class and let $\seq{\sigma,Q}\in\PP$. There is an $\epsilon>0$ and a condition $\seq{\tau,R}$ extending $\seq{\sigma,Q}$ such that either
\begin{itemize}
	\item $[\tau]\cap S=\emptyset$, or
	\item If $X\in R$, then $\rho(S\mid X)\geq \epsilon$.
\end{itemize}
Therefore, if $G\subseteq\PP$ is sufficiently generic, then $X_G$ is a positive density point. 
\end{claim}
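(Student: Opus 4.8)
The plan is to prove the displayed dichotomy for a fixed $\Pi^0_1$ class $S$ and condition $\seq{\sigma,Q}$, with $\delta<1/2$ its witness, and then read off positive density of $X_G$ by genericity. First I would split on whether $S$ covers $[\sigma]\cap Q$. If there is a point $Z\in([\sigma]\cap Q)\setminus S$, then since $S$ is closed some $\tau\succeq\sigma$ with $\tau\prec Z$ has $[\tau]\cap S=\emptyset$; as $Z\in[\tau]\cap Q$, Claim~\ref{cl:random}(1) makes $\seq{\tau,Q}$ a condition extending $\seq{\sigma,Q}$, and the first alternative holds (with $\epsilon$ arbitrary). The substance is the complementary case $[\sigma]\cap Q\subseteq S$, in which $S$ is locally at least as large as $Q$ everywhere below $\sigma$.

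In that case I would first produce $\tau$. Since $[\sigma]\cap Q$ is a nonempty $\Pi^0_1$ class of randoms it has positive measure (a null $\Pi^0_1$ class yields a Martin-L\"of test covering all its members, hence contains no random), so because $[\sigma]\cap Q\subseteq S$ the Lebesgue density theorem provides $Y\in[\sigma]\cap Q$ with $\lim_i\mu_{Y\uh i}(S)=1$. Fix a rational $\epsilon$ with $0<\epsilon<1/2-\delta$, choose $N\geq|\sigma|$ with $\mu_{Y\uh i}(S)>\epsilon$ for all $i\geq N$, and set $\tau=Y\uh N$. Then form the c.e.\ open set $W=\bigcup\{[\rho]:\rho\succeq\tau\text{ and }\mu_\rho(S)<\epsilon\}$, which is $\Sigma^0_1$ since $\mu_\rho(S)$ is upper semicomputable for the $\Pi^0_1$ class $S$, and put $R=([\tau]\cap Q)\setminus W$. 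This $R$ is $\Pi^0_1$ and contained in $Q\subseteq P$; every $X\in R$ has $\mu_{X\uh i}(S)\geq\epsilon$ for all $i\geq|\tau|$, so $\rho(S\mid X)\geq\epsilon$, and $Y\in R$, so $[\tau]\cap R\neq\emptyset$.

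The main obstacle is the fourth clause in the definition of $\PP$: that $R$ stays thick in $P$ below $\tau$. For $\rho\succeq\tau$ with $[\rho]\cap R\neq\emptyset$ I would estimate $\mu_\rho(P)-\mu_\rho(R)\leq(\mu_\rho(P)-\mu_\rho(Q))+\mu_\rho(Q\cap W)\leq\delta+\mu_\rho(Q\cap W)$, reducing everything to a bound on $\mu_\rho(Q\cap W)$. Here the case hypothesis is exactly what is needed: each generator $[\rho']$ of $W$ has $\rho'\succeq\tau\succeq\sigma$, so $[\rho']\cap Q\subseteq S$ and hence $\mu(Q\cap[\rho'])\leq\mu(S\cap[\rho'])<\epsilon\,\mu([\rho'])$. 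Since $[\rho]\cap R\neq\emptyset$ means $\rho$ extends no generator, the minimal generators inside $[\rho]$ are proper extensions of $\rho$ forming an antichain, and summing the last inequality over them gives $\mu_\rho(Q\cap W)\leq\epsilon$. Thus $\delta'=\delta+\epsilon<1/2$ witnesses $\seq{\tau,R}\in\PP$; it extends $\seq{\sigma,Q}$ and establishes the second alternative.

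For the final sentence, note that for each $\Pi^0_1$ class $S$ the conditions realizing this dichotomy are dense in $\PP$, so a sufficiently generic $G$ contains one such $\seq{\tau,R}$. If $X_G\in S$, then $X_G\in[\tau]\cap S$ rules out the first alternative, and since $X_G\in R$ (a standard feature of generic filters for this poset) we obtain $\rho(S\mid X_G)\geq\epsilon>0$. As $S$ was arbitrary, $X_G$ is a positive density point.
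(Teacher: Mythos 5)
Your proof is correct, and it shares the paper's skeleton: the same dichotomy (either $S$ misses a subcone of $Q\cap[\sigma]$, or $Q\cap[\sigma]\subseteq S$), the same idea of pruning away the open set of strings where a local density drops below a rational $\epsilon<1/2-\delta$, and the same prefix-free summing argument showing that the pruning removes at most $\epsilon$ of relative measure from every cone, so that $\delta+\epsilon<1/2$ witnesses the new condition; the genericity conclusion is handled the same way in both. The genuine differences are in what you prune on and how you get nonemptiness. The paper prunes on the density of $Q$ itself, forming $Q^\epsilon_\sigma=\{X\in Q\cap[\sigma]\colon(\forall n\geq|\sigma|)\ \mu_{X\uh n}(Q)\geq\epsilon\}$, keeps the stem $\sigma$, and proves $[\sigma]\cap Q^\epsilon_\sigma\neq\emptyset$ by counting: $\mu_\sigma(Q^\epsilon_\sigma)>\mu_\sigma(Q)-\epsilon>0$, which is why it requires $\epsilon<\min\{1/2-\delta,\mu_\sigma(Q)\}$. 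You prune on the density of $S$, and for nonemptiness you move the stem to a long prefix of a Lebesgue density point $Y$ of $[\sigma]\cap Q$ and observe that $Y$ itself survives the pruning. Your route must invoke the case hypothesis $Q\cap[\sigma]\subseteq S$ inside the antichain estimate (where the paper's bound is immediate, since it prunes on $Q$), and it imports the classical, non-effective Lebesgue density theorem where the paper's counting argument is elementary and self-contained. The paper's formulation also pays off later: the operator $Q\mapsto Q^\epsilon_\sigma$ and its constraint on $\epsilon$ are exactly what the effectivized $\Delta^0_2$ construction in Section~\ref{sec:delta2} reuses, whereas your density-point argument would not effectivize as directly. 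Both arguments are sound; yours trades elementarity for a slightly more direct handle on $S$.
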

\begin{proof}
If there is a $\tau\succeq \sigma$ such that $[\tau]\cap S = \emptyset$ and $[\tau]\cap Q\neq \emptyset$, then let $\seq{\tau, Q}$ be our condition. 

Otherwise, it follows that $S \cap [\sigma] \supseteq Q \cap [\sigma]$. In this case let $\delta$ witness that  $\seq{\sigma,Q}\in\PP$. Take $\epsilon$ to be a rational  greater than $0$  and less than $\min\{1/2-\delta,  \mu_{\sigma}(Q)\}$. (Note that $\mu_{\sigma}(Q) >0$ because $[\sigma] \cap Q$ is a non-empty $\Pi^0_1$ class containing only Martin-L\"of random sets.) Consider the $\Pi^0_1$ class
\[
Q^\epsilon_\sigma = \{X\in Q\cap[\sigma] \colon (\forall n\geq|\sigma|)\; \mu_{X\uh n}(Q) \geq \epsilon\}.
\]
We will show that $\seq{\sigma, Q^\epsilon_\sigma}$ is the required condition.

Let $M$ be the set of minimal strings in $\{\rho\succeq\sigma \colon \mu_{\rho}(Q)<\epsilon\}$. Then $M$ is prefix-free and $Q^\epsilon_\sigma = Q\cap[\sigma]\smallsetminus Q\cap[M]$. Summing over $M$ gives us $\mu_{\sigma}(Q\cap [M])<\epsilon$. Hence $\mu_{\sigma}(Q^\epsilon_\sigma) > \mu_{\sigma}(Q) - \epsilon > 0$. This proves that $[\sigma] \cap Q^\epsilon_\sigma \neq \emptyset$.

If $\tau\succeq\sigma$ and $[\tau]\cap Q^\epsilon_\sigma\neq \emptyset$, we can use the same argument to show that $\mu_{\tau}(Q^\epsilon_\sigma) > \mu_{\tau}(Q) - \epsilon$. Because $[\tau]\cap Q \neq \emptyset$,
\[
\mu_{\tau}(P) \leq \mu_{\tau}(Q) + \delta < \mu_{\tau}(Q^\epsilon_\sigma) + \epsilon + \delta.
\]
Hence $\epsilon+\delta < 1/2$ witnesses that $\seq{\sigma, Q^\epsilon_\sigma}$ is a condition.

Note that if $X\in Q^\epsilon_\sigma$, then $\rho(Q \mid X)\geq \epsilon$. This implies that $\rho(S \mid X) \geq \epsilon$ because $S\cap [\sigma] \supseteq Q\cap [\sigma]$, proving the claim.
\end{proof}

A difference test is a $\Pi^0_1$ class $R$ and a uniform sequence of open sets $\seq{U_n \colon n\in \omega}$ such that for all $n$, $\mu(U_n \cap R) \le 2^{-n}$. A set $X$ is captured by such a difference test if $X \in \bigcap_{n\in\omega} U_n \cap R$. We call  a set $X$  \emph{difference random} if it is not captured by any difference test. Difference randomness was introduced by Franklin and Ng~\cite{Fran_Ng_2011}. They showed that $X$ is difference random if and only if $X$ is Martin-L\"of random and $X\not \ge_T \zj$. Hence  Claims \ref{cl:random} and \ref{cl:incomplete} along with Theorem~\ref{thm:incomplete} establish that if $G \subseteq \PP$ is sufficiently generic, then $X_G$ is difference random.

\begin{claim}
\label{cl:avoidance}
Assume that $A\in 2^\omega$ is not $K$-trivial, $\seq{\sigma, Q}\in\PP$, and $\Phi$ is a Turing functional. There is a $\tau\in 2^{<\omega}$ such that $\seq{\tau, Q}$ extends $\seq{\sigma, Q}$ and
\[
(\forall X \in [\tau] \cap Q)[\;\Phi^X = A \implies X \text{ is not difference random}\;].
\]
Therefore, if $G\subseteq \PP$ is sufficiently generic relative to $A$, then $X_G$ does not compute~$A$. 
\end{claim}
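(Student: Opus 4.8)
The plan is to reduce the statement to the construction of a single \emph{unrelativized} difference test. Recall that, by definition, $X$ is difference random if and only if it is not captured by any difference test; hence it suffices to find $\tau\succeq\sigma$ with $[\tau]\cap Q\neq\emptyset$ — so that $\seq{\tau,Q}\in\PP$ by Claim~\ref{cl:random}(1) — together with a difference test $(R,\seq{U_m})$, where $R=Q\cap[\tau]$, that captures every $X\in[\tau]\cap Q$ with $\Phi^X=A$. Any such $X$ then lies in $\bigcap_m U_m\cap R$ and so is not difference random, which is exactly the implication we want. The final sentence then follows by a standard genericity argument: the conditions produced above are dense (relative to $A$) for each Turing functional $\Phi$, and a $G$ that is sufficiently generic relative to $A$ meets all of them; since $X_G$ is difference random by Claims~\ref{cl:random} and~\ref{cl:incomplete} together with Theorem~\ref{thm:incomplete}, we conclude $\Phi^{X_G}\neq A$ for every $\Phi$, i.e.\ $X_G\not\geq_T A$.

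To see where the hypothesis on $A$ enters, observe that there is always an $A$-relative difference test capturing the target. Writing $V_v=\{X\in Q:v\prec\Phi^X\}$ for $v\in 2^{<\omega}$, the measures $\mu(V_v)$ are uniformly right-c.e., and along the true path the values $\mu(V_{A\uh n})$ are nonincreasing with limit $\mu\{X\in Q:\Phi^X=A\}$. If this limit were positive then, by a classical theorem of Sacks, $A$ would be computable and hence $K$-trivial, contrary to hypothesis; so the limit is $0$, and recursively in $A$ one can select lengths $n_0<n_1<\cdots$ with $\mu(V_{A\uh{n_m}})\leq 2^{-m}$. Then $U_m=V_{A\uh{n_m}}$ already captures the target and meets the measure budget — but it is built from $A$, not plainly. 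The entire difficulty is to \emph{plainify} this test: to replace each threshold string $A\uh{n_m}$ by a uniformly c.e.\ prefix-free family $B_m$ that stays within the budget $\mu\!\left(\bigcup_{v\in B_m}V_v\cap R\right)\leq 2^{-m}$ while still covering $A\uh{n_m}$.

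I would carry this out by contraposition: assuming that \emph{no} admissible $\tau$ carries such a plain covering test, I would show that $A$ is $K$-trivial. Failure to meet the budget means that at the relevant scales too many ``heavy'' branches $v$ (those with $\mu(V_v)$ bounded below) compete with $A\uh n$, and heavy branches are cheap to describe: since the $V_v$ for incomparable $v$ are pairwise disjoint subsets of $Q$, one has $\sum_{|v|=n}\mu(V_v)\leq 1$, so at most $2^{d}$ length-$n$ strings can have $\mu(V_v)\geq 2^{-d}$. Tracking, via $\emptyset'$, the heaviest branch at each length yields a $\Delta^0_2$ approximation of $A$ (in particular $A\leq_T\emptyset'$), and charging each change of this approximation to the corresponding loss of preimage measure should produce a Kraft--Chaitin request set of finite total weight witnessing $K(A\uh n)\leq K(n)+O(1)$. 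In cost-function language, the cross-scale summability $\sum_{|v|=n}\mu(V_v)\leq 1$ is what I would leverage to force this approximation of $A$ to obey the standard cost function, and hence $A$ to be $K$-trivial — the desired contradiction.

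The main obstacle is precisely this last step: matching the difference-test budget $2^{-m}$ against the Kraft--Chaitin/cost-function budget for $A$, so that the same geometric loss of preimage measure simultaneously keeps $\mu(U_m\cap R)\leq 2^{-m}$ and pays for the descriptions of the $A\uh n$. Getting the bookkeeping right — including the preliminary reduction to $A\leq_T\emptyset'$ and the uniform $\emptyset'$-approximation of the right-c.e.\ measures $\mu(V_v)$ — is where the real work lies. The density requirement $\delta<1/2$ guarantees that $Q$ is fat enough inside $P$ that these preimage measures genuinely reflect branching among the randoms, and Claim~\ref{cl:random}(1) ensures that extending $\sigma$ to $\tau$ while keeping $Q$ fixed never leaves $\PP$, so no re-verification of the forcing condition is needed.
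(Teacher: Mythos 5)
Your reduction to building a single unrelativized difference test capturing every $X\in[\tau]\cap Q$ with $\Phi^X=A$ is the right framing, and your diagnosis that the whole difficulty is to make the test plain (free of the oracle $A$) is accurate. But your proposal stops exactly there: the contrapositive argument --- ``if no plain covering test exists over any admissible $\tau$, then $A$ is $K$-trivial'' --- is only sketched, and you yourself flag the bookkeeping (matching the $2^{-m}$ test budget against a Kraft--Chaitin/cost-function budget for descriptions of $A$) as ``where the real work lies.'' That step is not a routine verification; it is essentially the content of the Hirschfeldt--Nies--Stephan theorem that every base for Martin-L\"of randomness is $K$-trivial, a substantial result whose proof is precisely such a cost-function construction. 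So as written, the proposal is a plan whose central lemma is missing, not a proof.

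The paper's proof sidesteps the plainification problem entirely, using two ideas absent from your proposal. First, a dichotomy: either there is a $\rho\succeq\sigma$ with $[\rho]\cap Q\neq\emptyset$ and $\Phi^\rho(n)\downarrow\neq A(n)$ for some $n$ (then $\tau=\rho$ makes the implication vacuous), or else \emph{every} computation on $[\sigma]\cap Q$ is compatible with $A$. Second, in the latter case, one takes the self-referential sets $V_n=\{X\in 2^\omega\colon X\in U_n[\Phi^X]\}$, where $U_n[Z]$ is the $n$th level of the universal Martin-L\"of test relative to $Z$. These are uniformly c.e.\ open with no reference to $A$ (membership has finite use in $X$), and compatibility gives $U_n[\Phi^X]\subseteq U_n[A]$ for all $X\in[\sigma]\cap Q$, hence $\mu(V_n\cap[\sigma]\cap Q)\leq\mu(U_n[A])\leq 2^{-n}$: a plain difference test for free. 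The hypothesis on $A$ then enters only through a black-box citation of Hirschfeldt--Nies--Stephan: since $A$ is not $K$-trivial, it is not a base for randomness, so any $X\in[\sigma]\cap Q$ with $\Phi^X=A$ is not Martin-L\"of random relative to $A$, i.e.\ $X\in U_n[A]=U_n[\Phi^X]$ for every $n$, and is therefore captured with $\tau=\sigma$. To repair your argument, either invoke that theorem the same way (at which point your cost-function machinery becomes unnecessary), or accept that you are reproving it and supply the full construction.
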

\begin{proof}
If there is a $\rho\succeq\sigma$ and an $n$ such that $\Phi^\rho(n)\downarrow \neq A(n)$ and $[\rho]\cap Q \neq \emptyset$, then take $\tau = \rho$.

Assume that no such $\rho$ and $n$ exist. Define $V_n = \{X\in 2^\omega \colon X \in U_n[\Phi^X]\}$, where $U_n[Z]$ is the $n$th level of the universal Martin-L\"of test relative to $Z$. If $X\in V_n \cap [\sigma]\cap Q$, then because $\Phi^X$ is not incompatible with $A$, we have $X\in U_n[\Phi^X]\subseteq U_n[A]$. Hence $\mu(V_n \cap [\sigma] \cap Q) \le \mu U_n(A) \le 2^{-n}$. In other words, $Q$ and $\seq{V_n\cap [\sigma]\colon n\in\omega}$ form a difference test.

Now assume that $X\in [\sigma] \cap Q$ and $\Phi^X = A$. Hirschfeldt, Nies and Stephan \cite{HiNiSt:07} showed that because $A$ is not $K$-trivial, it is not a \emph{base for randomness}. In other words, no set that is Martin-L\"of random relative to $A$ can compute $A$, so $X$ is not random relative to $A$. Therefore, $X\in U_n[A] = U_n[\Phi^X]$ for all $n$. This shows that $X\in \bigcap_{n\in\omega} V_n \cap [\sigma] \cap Q$, so $X$ is not difference random. Hence the claim is satisfied by taking $\tau = \sigma$.
\end{proof}

\section{Effectivizing the forcing}
\label{sec:delta2}

In this section we give a construction of a $\Delta^0_2$ set with properties \eqref{random}, \eqref{not density} and \eqref{incomplete}. This construction is an effectivization of the forcing approach. It is conceptually similar to Sacks's construction of a $\Delta^0_2$ minimal degree, which can be seen as an effectivization of Spector's minimal degree construction~\cite{Sacks_1963,Spector_1956}. 

\begin{thm}
There is a $\Delta^0_2$ set with properties \eqref{random}, \eqref{not density} and \eqref{incomplete}.
\end{thm}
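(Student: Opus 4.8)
The plan is to turn the dense-set machinery from Section~\ref{sect:forcing} into a $\emptyset'$-effective construction of a nested sequence of conditions whose string parts converge to the desired $X$. Since the forcing conditions $\seq{\sigma,Q}$ involve $\Pi^0_1$ classes $Q$ with a quantitative density constraint, I would first fix a uniform effective enumeration of (indices for) $\Pi^0_1$ subclasses of $P$, and note that the predicate ``$\seq{\sigma,Q}\in\PP$'' together with the extension relation is arithmetically definable at a low level — in fact, deciding whether a candidate extension works requires only $\Pi^0_1$ and $\Sigma^0_1$ facts about measures of $\Pi^0_1$ classes, which are approximable from below and hence decidable with a $\emptyset'$ oracle. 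The key observation is that each of the three density requirements \eqref{random}, \eqref{not density}, \eqref{incomplete} was met in Section~\ref{sect:forcing} by an \emph{explicit, effective} dense-set argument: Claim~\ref{cl:random} adds one bit, Claim~\ref{cl:not density} produces an initial segment driving the relative measure below $1/2$, and Claim~\ref{cl:incomplete} replaces $Q$ by the explicitly defined shrinkage $Q^\epsilon_\sigma$. None of these constructions searches for a witness whose existence is merely guaranteed nonconstructively; each either finds a witness by a $\emptyset'$-decidable search or writes one down directly.

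Concretely, I would build a computable-in-$\emptyset'$ sequence $\seq{\sigma_0,Q_0}\succeq\seq{\sigma_1,Q_1}\succeq\cdots$ as follows. Start from $\seq{\lambda,P}$. At stage $2n$, meet the $n$th instance of the density requirement of Claim~\ref{cl:not density}: using a $\emptyset'$ oracle, locate a sufficiently long initial segment of the left-most path of $[\sigma]\cap Q$ extending past a block of $1$'s (the existence of such a block is guaranteed because every element of $P$ is Martin-L\"of random, and the left-most path is uniformly $\emptyset'$-computable as a $\Pi^0_1$ class is involved), and pass to the extension forcing $\mu_{X\uh l}(P)<1/2$ for some $l\geq n$. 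At stage $2n+1$, handle the $n$th $\Pi^0_1$ class $S_n$ in a fixed enumeration to meet Claim~\ref{cl:incomplete}: with $\emptyset'$, decide whether there is a $\tau\succeq\sigma$ with $[\tau]\cap S_n=\emptyset$ and $[\tau]\cap Q\neq\emptyset$; if so take it, and if not form the condition $\seq{\sigma,Q^\epsilon_\sigma}$ with a rational $\epsilon$ chosen as in the proof of that claim. Throughout, whenever a new first coordinate is needed I extend via Claim~\ref{cl:random}(1), whose hypothesis $[\tau]\cap Q\neq\emptyset$ is $\emptyset'$-decidable. Setting $X=\bigcup_n\sigma_n$, the filter generated by the $\seq{\sigma_n,Q_n}$ is sufficiently generic to yield properties \eqref{random}, \eqref{not density} and \eqref{incomplete}, and $X\leq_T\emptyset'$ since the whole construction runs computably in $\emptyset'$.

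Two points need care to make $X$ genuinely $\Delta^0_2$. First, I must verify that the verification of condition membership and of the extension choices really only costs one jump: the constraint defining $\PP$ quantifies over all $\rho\succeq\sigma$ and asserts an inequality $\mu_\rho(Q)+\delta\geq\mu_\rho(P)$, but since $\mu_\rho(Q)$ and $\mu_\rho(P)$ are limits of computable decreasing approximations (measures of $\Pi^0_1$ classes are right-c.e.\ reals, uniformly), the whole four-part definition of a condition is $\Pi^0_2$, and checking that a \emph{specific} proposed extension is a valid condition — with a \emph{specific} rational $\delta$ witnessing it — is $\Pi^0_1$, hence $\emptyset'$-decidable. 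Because each stage writes down an explicit $\delta$ (or $\epsilon+\delta$), I never need to search for the existential witness $\delta$; I carry it along. Second, I must check that $X$ is infinite, which follows because the Claim~\ref{cl:not density} and Claim~\ref{cl:random} steps each strictly extend $\sigma$. The main obstacle I anticipate is precisely this bookkeeping: confirming that every query made during the construction — ``does this block of $1$'s appear on the left-most path?'', ``is $[\tau]\cap Q$ nonempty?'', ``is the proposed triple a condition with this witness $\delta$?'' — is answerable by a single $\emptyset'$ query rather than requiring nested jumps, and that the $\epsilon$ and $\delta$ values chosen at odd stages remain rational and computably determined so that the emptiness problem ``$[\sigma]\cap Q^\epsilon_\sigma=\emptyset$'' stays $\emptyset'$-decidable at later stages. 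Once the complexity accounting is pinned down, the genericity verification is a direct transcription of the three claims already proved, so no new combinatorial idea is required beyond the effectivization itself.
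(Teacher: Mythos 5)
Your plan founders on a single but decisive complexity miscalculation, at exactly the place you flag as ``the main obstacle'': the stage-$2n+1$ dichotomy. With $\seq{\sigma,Q}$ in hand, the query ``does there exist $\tau\succeq\sigma$ with $[\tau]\cap S_n=\emptyset$ and $[\tau]\cap Q\neq\emptyset$?'' is \emph{not} $\emptyset'$-decidable. For each fixed $\tau$ the two conjuncts are a $\Sigma^0_1$ fact and a $\Pi^0_1$ fact, each answerable by $\emptyset'$; but the unbounded existential quantifier over $\tau$ makes the whole query $\Sigma^0_2$. Equivalently, its negation asserts the inclusion $Q\cap[\sigma]\subseteq S_n$ of one $\Pi^0_1$ class in another, a $\Pi^0_2$ statement that is not, in general, decidable from $\emptyset'$. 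A $\emptyset'$-search over $\tau$ terminates when a witness exists, but when none exists the search runs forever and your construction never learns that it should pass to $Q^\epsilon_\sigma$. As written, your nested-filter construction runs computably in $\emptyset''$ and so only yields a $\Delta^0_3$ set. (A secondary slip of the same kind: checking that a specific pair with a specific rational $\delta$ is a condition is not $\Pi^0_1$, because comparing measures of $\Pi^0_1$ classes --- right-c.e.\ reals --- up to an additive constant is a $\Pi^0_2$ matter. The paper never needs to verify conditionhood after the fact: new conditions arise only via Claim~\ref{cl:random}(1) or as $\seq{\sigma,Q^\epsilon_\sigma}$, whose conditionhood is inherited from the old condition together with an explicitly carried witness.)

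This $\Pi^0_2$ obstruction is precisely why the paper's proof is not a ``direct transcription'' of the three claims but a guessing construction, in the style of finite injury, with conditions that are \emph{not} nested in the second coordinate. Instead of deciding whether $Q\cap[\sigma]\subseteq S_e$, the construction always guesses ``yes,'' passes to $Q^{\epsilon}_{\sigma}$, and records the triple $\seq{Q,\sigma,\epsilon}$ in the list $a_s$. The guess is falsified in a $\emptyset'$-detectable way exactly when \eqref{eq:test} fires: there is a $\nu$ between the old and new strings with $\mu_{\nu}(S_e)<\epsilon(s,e)$ while $[\tau]\cap S_e\neq\emptyset$ --- a finite Boolean combination of $\Sigma^0_1$ and $\Pi^0_1$ facts over a bounded search space, hence $\emptyset'$-decidable. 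At that point the construction abandons all intervening classes and reverts to the recorded $Q(s,e)$ (this is why $p_{s+1}$'s class need not be contained in $p_s$'s), and the invariant $Q\subseteq Q(s,e)^{\epsilon(s,e)}_{\sigma(s,e)}$ yields $\mu_{\nu}(Q(s,e))\geq\epsilon(s,e)>\mu_{\nu}(S_e)$, which guarantees that a string $\xi\succeq\nu$ with $[\xi]\cap S_e=\emptyset$ and $[\xi]\cap Q(s,e)\neq\emptyset$ exists and can be found by a terminating $\emptyset'$-search. Each index $e$ is injured at most once (after reverting, $[\xi]\cap S_e=\emptyset$ persists along the construction, so \eqref{eq:test} never fires for $e$ again), the guesses stabilize, and positive density for the true $S_e$ containing $X$ follows from the permanent failure of \eqref{eq:test}. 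Without this mechanism --- or some other substitute for deciding the $\Pi^0_2$ inclusion --- your approach cannot be carried out below $\emptyset''$.
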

\begin{proof}
Using $\zj$ as an oracle we will define a sequence of conditions $\seq{ p_i \colon i \in \omega}$ in the partial order $\PP$. If $p_i = \seq{\tau, Q}$ and $p_{i+1}= \seq{\sigma, R}$ we will ensure that $\sigma \succeq \tau$. However we will not require that $R \subseteq Q$. Essentially, our oracle construction can make incorrect guesses as to which $\Pi^0_1$ classes to use, provided that a correct guess is made eventually. We will define $p_s$ at stage $s$ of the construction. Additionally at stage $s$ we will define $a_s$ to be a finite sequence of triples $\seq{Q, \sigma, \epsilon}$ where $Q$ is a $\Pi^0_1$ class, $\sigma\in 2^{<\omega}$, and $\epsilon$ is a rational. The sequence $a_s$ will be used to recover information about previous stages in the construction. We let $l(a_s)$ be the length of the sequence $a_s$ and we define partial functions $Q$, $\sigma$ and $\epsilon$ such that if $e <l(a_s)$ then $\seq{Q(s,e), \sigma(s,e), \epsilon(s,e)}$ is the $e$th element of $a_s$. We shall maintain the following construction invariants for all stages $s$:
\begin{enumerate}[(i)]
\item \label{i:order} If $i <j < l(a_s)$, then $Q(s,j) \subseteq Q(s,i)^{\epsilon(s,i)}_{\sigma(s,i)}$ and $\sigma(s,i)\preceq\sigma(s,j)$.
\item \label{i:condition} If $p_s = \seq{\tau, R}$ and  $i < l(a_s)$ then $R \subseteq Q(s,i)^{\epsilon(s,i)}_{\sigma(s,i)}$ and $\sigma(s,i)\preceq\tau$.
\end{enumerate}
The construction is as follows.  Let $\seq{S_e \colon e \in \omega}$ enumerate all $\Pi^0_1$ classes. At stage $0$, let $p_0 = \seq{\lambda, P}$ and let $a_0$ be the empty sequence. Our construction invariants hold trivially.

At stage $s+1$, given $p_s = \seq{\tau, Q}$, we use $\zj$ to find a condition $\seq{\sigma, Q}$ such that $\sigma$ is a strict extension of $\tau$, and $\mu_{\sigma}(P) < 1/2$. Claim \ref{cl:not density}  established that such a condition exists, and as the value of $\mu_{\sigma}(P)$ is computable in $\zj$  we can simply search for a suitable $\sigma$. At this point we ask the following question. Does there exist  $e < l(a_s)$ and $\nu$ such that 
\begin{equation}
\label{eq:test}
(\tau \preceq \nu \preceq \sigma) \wedge ([\tau] \cap S_e \ne \emptyset)\wedge(\mu_{\nu}(S_e) < \epsilon(s,e))?
\end{equation}
If not, then we define $p_{s+1} = \seq{\sigma,  Q^{\epsilon_{s+1}}_\sigma}$ where $\epsilon_{s+1}$ is chosen to make $p_{s+1}$ a condition. It follows from the proof of Claim \ref{cl:incomplete} that $\epsilon_{s+1}$ can simply be chosen to be strictly less than $\min\{\mu_{\sigma}(Q), 1/2 - \sum_{i\le s} \epsilon_i\}$. Define $a_{s+1}$ to be the sequence obtained by appending $\seq{Q, \sigma, \epsilon_{s+1}}$ to the end of $a_s$.  Note that the construction invariants are maintained. 

If \eqref{eq:test} holds for some suitable  $e$ and $\nu$, then choose some $e$ and $\nu$ such that $e$ is minimal.  
Our construction invariants ensure that $Q \subseteq Q(s,e)^{\epsilon(s,e)}_{\sigma(s,e)}$ and $\nu\succeq\tau\succeq\sigma(s,e)$. 
This  implies that $\mu_{\nu}(Q(s, e)) \ge \epsilon(s,e)$. 
Therefore there is some $\xi \succeq \nu$ such that 
$[\xi] \cap Q(s,e) \ne \emptyset$ and  $[\xi] \cap S_e = \emptyset$. 
Define $p_{s+1} = \seq{\xi, Q(s,e)}$ and define $a_{s+1} = a_s \uh e$. Observe that construction invariant \eqref{i:order} is maintained because $a_{s+1}$ is a subsequence of $a_s$, and construction invariant \eqref{i:condition} is maintained because construction invariant \eqref{i:order}  held at stage $s$. 
This ends the construction. 

Let $X = \bigcup\, \{ \tau \colon (\exists s, Q)\; p_s=\seq{\tau, Q}\}$. To verify that $X$ has the desired properties, we first show that $\lim_s l(a_s) = \infty$. Assume that for some $s_0$, for all $s \ge s_0$, $l(a_s) \ge e$. Assume at some stage $s_1 > s_0$, we have that $l(a_s) = e$. This can only occur because  \eqref{eq:test}  held for $e$, and $e$ was the least such value for which it held. 
Hence if $\seq{\tau, Q} = p_{s_1}$ then $[\tau] \cap S_e= \emptyset$. This implies that \eqref{eq:test} will never again hold for $e$ and hence for all  $s > s_1$, $l(a_s) \ge e+1$.

If $l(a_{s+1}) > l(a_s)$, then condition \eqref{eq:test} does not hold. Hence as $\lim_s l(a_s) = \infty$, for infinitely many stages $s$, condition \eqref{eq:test} does not hold. This implies that $X$ has infinite length, hence is a Martin-L\"of random set, and $\rho(P \mid X) \leq 1/2$. Now assume that for some $e$, $X \in S_e$. Let $s_0$ be a stage such that for all $s \ge s_0$, $l(a_s) > e$. Let $\seq{\tau, Q} = p_{s_0}$. It must be that for any finite string $\nu$ such that $\tau \preceq \nu \prec X$, $\mu_{\nu}(S_e) > \epsilon(s_0,e)$ because for all $s \ge s_0$ we know that condition \eqref{eq:test} does not hold for $e$. Hence $\rho(S_e\mid X) > 0$. 
\end{proof}

%
%

\bibliographystyle{plain}
\bibliography{references}

\end{document}